\def\@copyrightyear{\relax}
\newcommand{\N}{\mathbb{N}}
\newcommand{\Z}{\mathbb{Z}}
\newcommand{\col}{\normalfont{\text{Col}}}
\newtheorem{definition}{Definition}
\newtheorem{theorem}{Theorem}
\newtheorem{corollary}{Corollary}
\newtheorem{proposition}{Proposition}
\title{A \textit{p}-adic approach to piecewise polynomial dynamical systems}
\author{Vinny Pagano}
\address{Princeton University, Princeton, New Jersey}
\email{vpagano@princeton.edu}
\thanks{The author would like to thank Dr.\@ Mark McConnell for their encouragement, as well as Professor Jeffrey Lagarias for confirming the originality of this work.}
\begin{document}
\maketitle
\thispagestyle{empty}
\begin{abstract}
\noindent Using \(p\)-adic numbers, we partially categorize the cycles of a sizable class of polynomial dynamical systems. In turn, we prove a few results related to the non-trivial cycles of the \textit{Collatz map} \(\col : \Z_+ \to \Z_+\) defined by $$\col(n) = \begin{cases}
3n + 1, & n \text{ is odd;}
\\ n/2, & \text{otherwise.}\end{cases}$$ Proving the non-existence of non-trivial Collatz cycles would reduce the Collatz conjecture to whether the Collatz map can diverge to infinity.
\end{abstract}
\tableofcontents
\section{Main Results}
\begin{definition}
Given some property \(\mathfrak{P}\) and a cycle \( (\alpha_0,\dots,\alpha_{m-1})\) having \(\alpha_i = \alpha_{m + i}\) for \(i \geq 0\), define the sum and number of terms which satisfy \(\mathfrak{P}\) as:
\[S_{\mathfrak{P}} = \sum_{i = 0 \text{ : } \mathfrak{P}}^{m-1} \alpha_i, \quad N_{\mathfrak{P}} = \sum_{i = 0 \text{ : } \mathfrak{P}}^{m-1} 1.\]
Moreover, set \(S = S_{\mathfrak{P}} + S_{\neg\mathfrak{P}}\).
\end{definition}
\begin{theorem}\label{main}
The integer cycles generated by any function of the form
\[f(x) = \begin{cases}
\sum_{i = 0}^{N_1} a_i x^i/p, & p \mid x;\\
\sum_{i = 0}^{N_2} b_i x^i, & \normalfont{\text{otherwise}}
\end{cases}\]
for \(a_i, b_i \in \Z,\) \(p \in \Z_+\) must satisfy:\[(p - 1) S = a_0 N_{p \mid x} + (a_1 - 1)S_{p \mid x} + \sum_{i = 2}^{N_1} a_i S_{p \mid x}^i + pb_0 N_{p \centernot \mid x} + (pb_1 - 1)S_{p \centernot \mid x} + \sum_{i = 2}^{N_2} pb_i S_{p \centernot \mid x}^i.\]
\end{theorem}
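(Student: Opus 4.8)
The plan is to sum the defining recurrence over one full period of the cycle and to exploit the cyclic identity $\sum_{k=0}^{m-1}\alpha_{k+1}=\sum_{k=0}^{m-1}\alpha_k=S$. Throughout I read the symbol $S_{\mathfrak{P}}^{\,i}$ as the degree-$i$ power sum $\sum_{k\,:\,\mathfrak{P}}\alpha_k^{\,i}$ taken over the terms of the cycle satisfying $\mathfrak{P}$, so that $S_{\mathfrak{P}}^{\,0}=N_{\mathfrak{P}}$ and $S_{\mathfrak{P}}^{\,1}=S_{\mathfrak{P}}$, with $\mathfrak{P}$ the divisibility property $p\mid x$; this is the only reading under which the asserted identity can hold, and I would flag it explicitly before beginning.

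First I would clear the denominator one step at a time. Since the cycle is generated by $f$, we have $\alpha_{k+1}=f(\alpha_k)$ with indices read modulo $m$, so multiplying by $p$ gives
\[
p\,\alpha_{k+1}=\begin{cases}\sum_{i=0}^{N_1}a_i\,\alpha_k^{\,i}, & p\mid\alpha_k;\\[2pt] \sum_{i=0}^{N_2}p\,b_i\,\alpha_k^{\,i}, & p\centernot\mid\alpha_k.\end{cases}
\]
This identity holds in $\Z$ on both branches: in the first the division by $p$ is exact precisely because $p\mid\alpha_k$, and in the second I have simply multiplied through by $p$.

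Next I would sum over $k=0,\dots,m-1$. By the cyclic property the left-hand side collapses to $p\sum_{k}\alpha_{k+1}=pS$. On the right I would partition the indices according to whether $p\mid\alpha_k$ and interchange the order of summation, so that the coefficient of $a_i$ becomes $\sum_{k\,:\,p\mid\alpha_k}\alpha_k^{\,i}=S_{p\mid x}^{\,i}$ and the coefficient of $p\,b_i$ becomes $S_{p\centernot\mid x}^{\,i}$. This produces
\[
pS=\sum_{i=0}^{N_1}a_i\,S_{p\mid x}^{\,i}+\sum_{i=0}^{N_2}p\,b_i\,S_{p\centernot\mid x}^{\,i}.
\]
Peeling off the $i=0$ and $i=1$ terms and substituting $S_{p\mid x}^{\,0}=N_{p\mid x}$, $S_{p\centernot\mid x}^{\,0}=N_{p\centernot\mid x}$ recovers every term on the right-hand side of the claim except for the passage from $pS$ to $(p-1)S$.

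Finally I would absorb the leftover $-S$. Using the partition $S=S_{p\mid x}+S_{p\centernot\mid x}$, I subtract $S$ from both sides; on the right this merges with the linear terms through $a_1 S_{p\mid x}-S_{p\mid x}=(a_1-1)S_{p\mid x}$ and $p b_1 S_{p\centernot\mid x}-S_{p\centernot\mid x}=(p b_1-1)S_{p\centernot\mid x}$, yielding exactly the stated identity. I expect no genuine obstacle here: the whole argument is a single summation over the period, and the only real care needed is bookkeeping—verifying that the $i=0$ and $i=1$ contributions are separated correctly and that the power-sum interpretation of $S_{\mathfrak{P}}^{\,i}$ is applied consistently throughout.
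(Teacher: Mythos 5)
Your proof is correct, but it takes a genuinely different and more elementary route than the paper. The paper never sums the recurrence directly: it first forms the \(p\)-adic integer \(A=\sum_{i\ge 0}\alpha_i\,p^i\) attached to an arbitrary orbit and telescopes \(A = n + A + (\cdots)\) to get a \(p\)-adic correspondence \(-n = \sum_{p\mid\alpha_i}(\cdots)p^i + \sum_{p\nmid\alpha_i}(\cdots)p^i\) (\autoref{p-adic correspondence theorem}); it then specializes to cycles, where periodicity lets the geometric series \(\sum_\ell p^{\ell m} = 1/(1-p^m)\) convert this into the finite weighted identity \(\alpha_j(p^m-1)=\sum_{i}(\cdots)p^i\) of \autoref{Z_p-to-Z theorem}; finally it sums that identity over all rotations \(j\), which factors out \((p^m-1)/(p-1)\) and eliminates the weights \(p^i\), giving the theorem. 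Your argument — multiply each branch of \(\alpha_{k+1}=f(\alpha_k)\) by \(p\), sum over one period, use \(\sum_k\alpha_{k+1}=\sum_k\alpha_k=S\), and absorb the leftover \(-S\) into the linear terms — reaches the same endpoint in a few lines with no \(p\)-adic machinery at all. What the longer route buys is the two intermediate results, which are strictly stronger than \autoref{main}: \autoref{p-adic correspondence theorem} applies to \emph{all} orbits, not just cycles, and \autoref{Z_p-to-Z theorem} gives one weighted equation per cycle element, of which the main theorem is merely the average; your proof produces only the averaged identity. You were also right to flag the reading of \(S_{\mathfrak{P}}^i\) as the power sum \(\sum_{k:\mathfrak{P}}\alpha_k^i\) rather than the \(i\)-th power of \(S_{\mathfrak{P}}\): the paper's own derivation in \autoref{podd theorem} confirms this is the intended meaning. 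One small inaccuracy in your justification: on the branch \(p\mid\alpha_k\), the divisibility \(p\mid\alpha_k\) does not by itself make the division exact (the constant term \(a_0\) need not be divisible by \(p\)); what legitimizes the step \(p\,\alpha_{k+1}=\sum_i a_i\alpha_k^i\) in \(\Z\) is simply the hypothesis that the cycle is an \emph{integer} cycle, so \(\alpha_{k+1}\in\Z\) and multiplying the defining equation by \(p\) is valid in \(\mathbb{Q}\) with both sides integers.
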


We first establish a wonderful correspondence theorem between orbits (obtained via successive iterations of the map) and \(p\)-adic numbers.

\begin{theorem}\label{p-adic correspondence theorem}
For a given integer $n$, define $\alpha_0 = n$ and $\alpha_i = f(\alpha_{i - 1})$ for $i > 0$. In the subring $\Z_p$ of \(p\)-adic rationals $\mathbb{Q}_p$, we have the following:
\begin{equation*}
\mathclap{\begin{aligned}
-n = \sum_{p \mid \alpha_i}\left(a_0 + (a_1 - 1)\alpha_i + \sum_{k = 2}^{N_1} a_k \alpha_i^k \right)\cdot p^i + \sum_{p \centernot\mid \alpha_i} \left(pb_0 + (pb_1 - 1)\alpha_i + \sum_{k = 2}^{N_2} pb_k \alpha_i^k\right) \cdot p^i.\end{aligned}}
\end{equation*}
\end{theorem}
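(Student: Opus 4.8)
The plan is to show that, no matter which branch of $f$ is applied at step $i$, the bracketed coefficient multiplying $p^i$ on the right-hand side is always exactly $p\alpha_{i+1} - \alpha_i$; once this uniform description is in hand, the entire sum telescopes in $\Z_p$ to $-\alpha_0 = -n$.

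First I would verify this branch identity directly from the recurrence $\alpha_{i+1} = f(\alpha_i)$. If $p \mid \alpha_i$, then $\alpha_{i+1} = \frac1p \sum_{k=0}^{N_1} a_k \alpha_i^k$, so multiplying by $p$ and subtracting $\alpha_i$ yields
\[
p\alpha_{i+1} - \alpha_i = a_0 + (a_1 - 1)\alpha_i + \sum_{k=2}^{N_1} a_k \alpha_i^k,
\]
which is precisely the coefficient appearing in the first sum. If instead $p \centernot\mid \alpha_i$, then $\alpha_{i+1} = \sum_{k=0}^{N_2} b_k \alpha_i^k$, so
\[
p\alpha_{i+1} - \alpha_i = pb_0 + (pb_1 - 1)\alpha_i + \sum_{k=2}^{N_2} pb_k \alpha_i^k,
\]
matching the second sum. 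Consequently the two branch-sums on the right-hand side merge into the single series $\sum_{i=0}^{\infty} (p\alpha_{i+1} - \alpha_i)\,p^i$, with no remaining dependence on which branch was taken at each step.

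The telescoping is then immediate in form: writing $\sum_{i\ge0}(p\alpha_{i+1} - \alpha_i)p^i = \sum_{i\ge0}\alpha_{i+1}p^{i+1} - \sum_{i\ge0}\alpha_i p^i$ and reindexing the first series by $j = i+1$, the two series agree in all terms except $i = 0$, leaving exactly $-\alpha_0 = -n$. The only genuine obstacle is that this splitting and reindexing of infinite series is legitimate solely once convergence is established, so the crux of the argument is $p$-adic. I would invoke the convergence criterion in $\Z_p$ — a series $\sum c_i$ converges iff $c_i \to 0$ — together with the integrality of the orbit: since each $\alpha_i \in \Z \subseteq \Z_p$, its $p$-adic valuation $v_p(\alpha_i)$ is nonnegative, whence $v_p\big((p\alpha_{i+1} - \alpha_i)p^i\big) \geq i \to \infty$ and likewise $v_p(\alpha_i p^i),\, v_p(\alpha_{i+1}p^{i+1}) \to \infty$. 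All three series therefore converge, which justifies the term-by-term subtraction and the reindexing and thereby completes the proof. (Note that no periodicity of the orbit is needed; the identity holds for every infinite integer orbit, with cycles being the special case of interest.)
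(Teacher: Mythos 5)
Your proof is correct and is essentially the paper's own argument in a slightly different dress: the paper defines $A=\sum_{i\ge 0}\alpha_i p^i$, uses $A=n+p\sum_{i\ge 0}f(\alpha_i)p^i$, splits $pf(\alpha_i)=\alpha_i+c_i$ (your bracket $c_i=p\alpha_{i+1}-\alpha_i$), and cancels $A$, which is exactly your telescoping. Your explicit verification of $p$-adic convergence (via $v_p\bigl((p\alpha_{i+1}-\alpha_i)p^i\bigr)\ge i$) is a point the paper leaves implicit, but the underlying route is the same.
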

\begin{proof}
Since $\alpha_i \in \Z$, we can define the \(p\)-adic integer
$$A = \sum_{i = 0}^{\infty}\alpha_i \cdot p^i = n + p\sum_{i = 0}^{\infty}f(\alpha_i) \cdot p^i.$$
We now decompose the expression and expand accordingly:
\begin{equation*}
\mathclap{\begin{aligned}
A &= n + p \sum_{p \mid \alpha_i} f(\alpha_i) \cdot p^i + p \sum_{p \centernot\mid \alpha_i} f(\alpha_i) \cdot p^i = n + p \sum_{p \mid \alpha_i} \sum_{k = 0}^{N_1} \frac{a_k \alpha_i^k}{p}\cdot p^i + p \sum_{p \centernot\mid \alpha_i} \sum_{k = 0}^{N_2} b_k \alpha_i^k \cdot p^i \\
  &= n + \sum_{p \mid \alpha_i} \alpha_i \cdot p^i + \sum_{p \mid \alpha_i}\left(a_0 + (a_1 - 1)\alpha_i + \sum_{k = 2}^{N_1} a_k \alpha_i^k \right)\cdot p^i + \sum_{p \centernot\mid \alpha_i} \alpha_i \cdot p^i + \sum_{p \centernot\mid \alpha_i} \left(pb_0 + (pb_1 - 1)\alpha_i + \sum_{k = 2}^{N_2} pb_k \alpha_i^k\right) \cdot p^i\\
  &= n + A + \sum_{p \mid \alpha_i}\left(a_0 + (a_1 - 1)\alpha_i + \sum_{k = 2}^{N_1} a_k \alpha_i^k \right)\cdot p^i + \sum_{p \centernot\mid \alpha_i} \left(pb_0 + (pb_1 - 1)\alpha_i + \sum_{k = 2}^{N_2} pb_k \alpha_i^k\right) \cdot p^i.
\end{aligned}}
\end{equation*}
This completes the proof. \end{proof}
Remarkably, the previous formula has a representation in $\Z$ for cycles.
\begin{theorem}\label{Z_p-to-Z theorem}
Suppose we have a cycle $K = (\alpha_0,\dots, \alpha_{m-1})$ so that $\alpha_i = \alpha_{m + i}$ for all $i \ge 0$. Then, for all $j \in \{0,\dots,m-1\}$,

\begin{equation*}\mathclap{\begin{aligned}\alpha_j \cdot (p^m - 1) = \sum_{i = 0 \text{ : } p \mid \alpha_{i + j}}^{m-1}\left(a_0 + (a_1 - 1)\alpha_{i + j} + \sum_{k = 2}^{N_1} a_k \alpha_{i + j}^k \right)\cdot p^i + \sum_{i = 0 \text{ : } p \centernot\mid \alpha_{i + j}}^{m - 1} \left(pb_0 + (pb_1 - 1)\alpha_{i + j} + \sum_{k = 2}^{N_2} pb_k \alpha_{i + j}^k\right) \cdot p^i.
\end{aligned}}\end{equation*}

\end{theorem}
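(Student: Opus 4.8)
The plan is to apply Theorem~\ref{p-adic correspondence theorem} to the orbit of a single cycle element and then exploit the periodicity of the coefficients. First I would consolidate notation: for each index $\ell$, write the bracketed coefficient attached to $p^i$ in the correspondence formula as
\begin{equation*}
c_\ell =
\begin{cases}
a_0 + (a_1 - 1)\alpha_\ell + \sum_{k = 2}^{N_1} a_k \alpha_\ell^k, & p \mid \alpha_\ell;\\
pb_0 + (pb_1 - 1)\alpha_\ell + \sum_{k = 2}^{N_2} pb_k \alpha_\ell^k, & p \centernot\mid \alpha_\ell.
\end{cases}
\end{equation*}
The crucial observation is that $c_\ell$ depends on $\ell$ only through the value $\alpha_\ell$; since the cycle hypothesis gives $\alpha_\ell = \alpha_{\ell + m}$ for all $\ell$, the sequence $(c_\ell)$ is itself $m$-periodic, i.e.\ $c_{\ell + m} = c_\ell$. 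Fixing $j$, the orbit started at $\alpha_j$ is exactly $\alpha_j, \alpha_{j+1}, \alpha_{j+2}, \dots$, so Theorem~\ref{p-adic correspondence theorem} with $n = \alpha_j$ merges its two case-sums into the single $p$-adic identity $-\alpha_j = \sum_{i = 0}^{\infty} c_{j + i}\, p^i$ in $\Z_p$.

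Next I would split this convergent series at the $m$-th term and factor $p^m$ out of the tail:
\begin{equation*}
-\alpha_j = \sum_{i = 0}^{m-1} c_{j + i}\, p^i + p^m \sum_{i = 0}^{\infty} c_{j + m + i}\, p^i.
\end{equation*}
By the $m$-periodicity of $(c_\ell)$ we have $c_{j + m + i} = c_{j + i}$, so the tail series is identical to the original one and equals $-\alpha_j$; equivalently, one may reapply Theorem~\ref{p-adic correspondence theorem} to the starting value $\alpha_{j+m} = \alpha_j$. Substituting and solving the resulting relation $-\alpha_j = \sum_{i=0}^{m-1} c_{j+i}\,p^i - \alpha_j p^m$ yields $\alpha_j(p^m - 1) = \sum_{i = 0}^{m-1} c_{j + i}\, p^i$, and re-expanding $c_{j+i}$ according to whether $p \mid \alpha_{i+j}$ recovers the two sums in the statement verbatim.

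Finally, I would address the passage from $\Z_p$ back to $\Z$: the left-hand side $\alpha_j(p^m - 1)$ is a rational integer, and the right-hand side is now a \emph{finite} sum of rational integers (each $c_{j+i} \in \Z$ since $a_k, b_k, \alpha_\ell \in \Z$). Because the canonical map $\Z \hookrightarrow \Z_p$ is injective, an equality of two rational integers holding in $\Z_p$ holds in $\Z$, which gives the claimed representation. The main obstacle I anticipate is the rigorous justification of the infinite-series manipulation — namely that splitting the series and pulling $p^m$ through the tail is valid. This is where completeness of $\Z_p$ and the ultrametric convergence $\lvert c_{j+i} p^i\rvert_p \le p^{-i} \to 0$ do the work, and it is precisely the periodicity $c_{\ell+m} = c_\ell$ forced by the cycle that collapses the tail back to $-\alpha_j$; verifying that this periodicity transfers correctly from $\alpha_\ell$ to the coefficient $c_\ell$ is the one place demanding care.
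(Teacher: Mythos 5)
Your proposal is correct and follows essentially the same route as the paper: apply \autoref{p-adic correspondence theorem} with $n = \alpha_j$, exploit the $m$-periodicity of the coefficients, and sum the resulting periodic $p$-adic series to produce the factor $p^m - 1$. The only difference is mechanical --- the paper factors the full series as (one-period sum) $\cdot \sum_{\ell \ge 0} p^{\ell m}$ and invokes the closed form $\sum_{\ell \ge 0} p^{\ell m} = 1/(1 - p^m)$, whereas you split off the first period and solve the self-referential equation $-\alpha_j = \sum_{i=0}^{m-1} c_{j+i}\,p^i - \alpha_j p^m$; your explicit appeal to the injectivity of $\Z \hookrightarrow \Z_p$ to pass back to an equality of integers is a point of rigor the paper leaves implicit.
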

\begin{proof}
By \autoref{p-adic correspondence theorem} and exploiting the cyclic nature of the orbit,
\begin{equation*}
\mathclap{\begin{aligned}
-\alpha_j &= \sum_{p \mid \alpha_{i + j}}\left(a_0 + (a_1 - 1)\alpha_{i + j} + \sum_{k = 2}^{N_1} a_k \alpha_{i + j}^k \right)\cdot p^i + \sum_{p \centernot\mid \alpha_{i + j}} \left(pb_0 + (pb_1 - 1)\alpha_{i + j} + \sum_{k = 2}^{N_2} pb_k \alpha_{i + j}^k\right) \cdot p^i\\
&= \left(\sum_{i = 0 \text{ : } p \mid \alpha_{i + j}}^{m-1}\left(a_0 + (a_1 - 1)\alpha_{i + j} + \sum_{k = 2}^{N_1} a_k \alpha_{i + j}^k \right)\cdot p^i + \sum_{i = 0 \text{ : } p \centernot\mid \alpha_{i + j}}^{m - 1} \left(pb_0 + (pb_1 - 1)\alpha_{i + j} + \sum_{k = 2}^{N_2} pb_k \alpha_{i + j}^k\right) \cdot p^i\right) \cdot \sum_{\ell = 0}^{\infty} p^{\ell \cdot m}\\
&= \left(\sum_{i = 0 \text{ : } p \mid \alpha_{i + j}}^{m-1}\left(a_0 + (a_1 - 1)\alpha_{i + j} + \sum_{k = 2}^{N_1} a_k \alpha_{i + j}^k \right)\cdot p^i + \sum_{i = 0 \text{ : } p \centernot\mid \alpha_{i + j}}^{m - 1} \left(pb_0 + (pb_1 - 1)\alpha_{i + j} + \sum_{k = 2}^{N_2} pb_k \alpha_{i + j}^k\right) \cdot p^i\right) \cdot \frac{1}{1 - p^m}.
\end{aligned}}
\end{equation*}
Since the last step is a property enjoyed by \(p\)-adic expansions, we are done.
\end{proof}
We can now relate the sum of the terms either divisible by \(p\) or not with the sum of all terms in a cycle. This is explicated through the next theorem.
\begin{theorem}\label{podd theorem}
Suppose we have a cycle $K = (\alpha_0,\dots, \alpha_{m-1})$. Then,
\begin{equation*}\mathclap{
\begin{aligned}
(p - 1) \sum_{i = 0}^{m - 1} \alpha_i = \sum_{i = 0 \text{ : } p \mid \alpha_{i}}^{m-1}\left(a_0 + (a_1 - 1)\alpha_{i} + \sum_{k = 2}^{N_1} a_k \alpha_{i}^k \right) + \sum_{i = 0 \text{ : } p \centernot\mid \alpha_{i}}^{m - 1} \left(pb_0 + (pb_1 - 1)\alpha_{i} + \sum_{k = 2}^{N_2} pb_k \alpha_{i}^k\right).
\end{aligned}}
\end{equation*}
\end{theorem}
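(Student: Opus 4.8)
The plan is to obtain this ``averaged'' identity by summing the per-index formula of \autoref{Z_p-to-Z theorem} over every starting point $j$ and then exploiting the cyclic symmetry of the orbit. To streamline the bookkeeping, I would first abbreviate the bracketed coefficient attached to each term of the cycle by writing
\[
c_\ell = \begin{cases} a_0 + (a_1 - 1)\alpha_\ell + \sum_{k = 2}^{N_1} a_k \alpha_\ell^k, & p \mid \alpha_\ell;\\ pb_0 + (pb_1 - 1)\alpha_\ell + \sum_{k = 2}^{N_2} pb_k \alpha_\ell^k, & p \centernot\mid \alpha_\ell,\end{cases}
\]
with all indices read modulo $m$, so that the periodicity $c_\ell = c_{\ell + m}$ mirrors $\alpha_\ell = \alpha_{\ell + m}$. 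In this notation \autoref{Z_p-to-Z theorem} reads compactly as $\alpha_j (p^m - 1) = \sum_{i = 0}^{m-1} c_{i+j}\, p^i$ for every $j$, while the target is precisely $(p-1)\sum_{i=0}^{m-1}\alpha_i = \sum_{\ell=0}^{m-1} c_\ell$ once $c_\ell$ is unfolded.

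Next I would sum the compact form over $j \in \{0,\dots,m-1\}$ and interchange the two finite sums, obtaining
\[
(p^m - 1)\sum_{j=0}^{m-1}\alpha_j = \sum_{i=0}^{m-1} p^i \sum_{j=0}^{m-1} c_{i+j}.
\]
The crucial observation is that for each fixed $i$, as $j$ ranges over a complete residue system modulo $m$, the shifted index $i + j$ does as well; hence the inner sum $\sum_{j=0}^{m-1} c_{i+j}$ equals the constant $\sum_{\ell=0}^{m-1} c_\ell$ independently of $i$. Factoring this constant out leaves the geometric series $\sum_{i=0}^{m-1} p^i = (p^m - 1)/(p-1)$, so the right-hand side collapses to $\big(\sum_{\ell=0}^{m-1} c_\ell\big)\cdot (p^m - 1)/(p - 1)$.

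Finally, since $p \geq 2$ forces $p^m - 1 \neq 0$, I would cancel the common factor $p^m - 1$ from both sides, arriving at $(p-1)\sum_{j=0}^{m-1}\alpha_j = \sum_{\ell=0}^{m-1} c_\ell$, which upon restoring the definition of $c_\ell$ and splitting the sum according to whether $p \mid \alpha_\ell$ is exactly the claimed equality. I do not anticipate a genuine obstacle here; the only point demanding care is the index-shift argument justifying that the inner sum over $j$ is independent of $i$, which is where the cyclic property $\alpha_{i+j} = \alpha_{(i+j)\bmod m}$ is indispensable—without it the geometric factor would fail to separate cleanly and the cancellation of $p^m - 1$ would not go through.
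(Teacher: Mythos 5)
Your proposal is correct and follows essentially the same route as the paper's own proof: summing the identity of \autoref{Z_p-to-Z theorem} over all $j$, interchanging the finite sums, using the cyclic shift-invariance to pull the $j$-sum out as a constant, and then cancelling the geometric factor $(p^m-1)/(p-1)$ against $p^m - 1$. Your $c_\ell$ abbreviation and the explicit remark that $p \geq 2$ guarantees $p^m - 1 \neq 0$ are tidier than the paper's presentation, but the argument is the same.
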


\begin{proof}
We sum over index \(j\) both sides of the equation of \autoref{Z_p-to-Z theorem}, noting that the cyclic symmetry of \(i\) and \(j\) on the sum allows us to relabel the indices without having to relabel the \(p^i\) terms. As a result, we can extract each \(p^i\) term from the invariant sum:

\begin{equation*}\mathclap{
\begin{aligned}
\alpha_j \cdot (p^m - 1) &= \sum_{i = 0 \text{ : } p \mid \alpha_{i + j}}^{m-1}\left(a_0 + (a_1 - 1)\alpha_{i + j} + \sum_{k = 2}^{N_1} a_k \alpha_{i + j}^k \right)\cdot p^i + \sum_{i = 0 \text{ : } p \centernot\mid \alpha_{i + j}}^{m - 1} \left(pb_0 + (pb_1 - 1)\alpha_{i + j} + \sum_{k = 2}^{N_2} pb_k \alpha_{i + j}^k\right) \cdot p^i\\
(p^m - 1) \sum_{j = 0}^{m-1} \alpha_j &= \sum_{i = 0}^{m-1} p^i \left(\sum_{j = 0 \text{ : } p \mid \alpha_{i + j}}^{m-1}\left(a_0 + (a_1 - 1)\alpha_{i + j} + \sum_{k = 2}^{N_1} a_k \alpha_{i + j}^k \right) + \sum_{j = 0 \text{ : } p \centernot\mid \alpha_{i + j}}^{m - 1} \left(pb_0 + (pb_1 - 1)\alpha_{i + j} + \sum_{k = 2}^{N_2} pb_k \alpha_{i + j}^k\right)\right)\\
&= \frac{p^m - 1}{p - 1} \left(\sum_{j = 0 \text{ : } p \mid \alpha_{j}}^{m-1}\left(a_0 + (a_1 - 1)\alpha_{j} + \sum_{k = 2}^{N_1} a_k \alpha_{j}^k \right) + \sum_{j = 0 \text{ : } p \centernot\mid \alpha_{j}}^{m - 1} \left(pb_0 + (pb_1 - 1)\alpha_{j} + \sum_{k = 2}^{N_2} pb_k \alpha_{j}^k\right)\right)
\end{aligned}}
\end{equation*}
Relabeling once more, we obtain the formula
\begin{equation*}\mathclap{
\begin{aligned}
(p - 1) \sum_{i = 0}^{m - 1} \alpha_i = \sum_{i = 0 \text{ : } p \mid \alpha_{i}}^{m-1}\left(a_0 + (a_1 - 1)\alpha_{i} + \sum_{k = 2}^{N_1} a_k \alpha_{i}^k \right) + \sum_{i = 0 \text{ : } p \centernot\mid \alpha_{i}}^{m - 1} \left(pb_0 + (pb_1 - 1)\alpha_{i} + \sum_{k = 2}^{N_2} pb_k \alpha_{i}^k\right),
\end{aligned}}
\end{equation*}
as desired.
\end{proof}
Using the terminology from the beginning of the paper gives us \autoref{main}.
\section{Some Consequences}
The previous theorems reduce the question of cyclic overlap among piecewise polynomial dynamical systems to solving Diophantine equations. It is perhaps helpful to restate these theorems for the Collatz map.
\begin{corollary}\label{2-adic correspondence theorem}
For any $n \in \N$, define $\alpha_0 = n$ and $\alpha_i = \col(\alpha_{i - 1})$ for $i > 0$. In the subring $\Z_2$ of dyadic rationals $\mathbb{Q}_2$, we have the following:
\[-n = \sum_{\alpha_i \normalfont{\text{ odd}}}(5 \alpha_i + 2) \cdot 2^i.\]
\end{corollary}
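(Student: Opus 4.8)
The plan is to recognize the Collatz map as the special case of the general piecewise polynomial $f$ obtained by setting $p = 2$, so that \autoref{p-adic correspondence theorem} applies essentially verbatim. First I would read off the coefficients by matching each branch of $\col$ against the template $f$. On the branch where $p \mid x$ (that is, $x$ even) the map sends $x \mapsto x/2$, so the requirement $\sum_{i = 0}^{N_1} a_i x^i / p = x/2$ forces $N_1 = 1$, $a_0 = 0$, and $a_1 = 1$. On the branch where $p \nmid x$ (that is, $x$ odd) the map sends $x \mapsto 3x + 1$, so $\sum_{i = 0}^{N_2} b_i x^i = 3x + 1$ forces $N_2 = 1$, $b_0 = 1$, and $b_1 = 3$.

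Next I would substitute these values directly into the expansion provided by \autoref{p-adic correspondence theorem}. The contribution of each even term is $a_0 + (a_1 - 1)\alpha_i = 0 + 0 \cdot \alpha_i = 0$, so the entire first sum collapses and only the odd indices survive. Each odd term contributes $p b_0 + (p b_1 - 1)\alpha_i = 2 + 5\alpha_i$, which yields exactly
\[-n = \sum_{\alpha_i \text{ odd}} (5\alpha_i + 2) \cdot 2^i,\]
as claimed.

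Since this is a \emph{direct specialization}, I do not anticipate a genuine obstacle; the only things to verify are that the two branches really do match the template (in particular that the division-by-$2$ branch corresponds to the choice $a_1 = 1$ with all remaining $a_k$ vanishing) and that the orbit $(\alpha_i)$ remains in $\Z$, which holds because $\col$ maps positive integers to positive integers so that the dyadic integer $A = \sum_i \alpha_i 2^i$ is well defined. It is worth flagging the structural reason the even-term sum vanishes: after the built-in division by $p$, the even branch is the identity map, so the correction term $(a_1 - 1)\alpha_i$ is forced to be zero. This is precisely why only the odd terms appear in the dyadic expansion, and it is the one point I would emphasize rather than treat as routine.
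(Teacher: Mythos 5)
Your proposal is correct and takes essentially the same route as the paper: the corollary is precisely the specialization of \autoref{p-adic correspondence theorem} to $p = 2$, $N_1 = N_2 = 1$, $a_0 = 0$, $a_1 = 1$, $b_0 = 1$, $b_1 = 3$, under which the even-branch terms $a_0 + (a_1 - 1)\alpha_i$ vanish identically and each odd term contributes $pb_0 + (pb_1 - 1)\alpha_i = 5\alpha_i + 2$. This matching of coefficients is exactly what the paper intends by stating the result as a corollary of the general theorem.
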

\begin{corollary}\label{Z_2-to-Z theorem}
Suppose we have a $\col$-cycle $K = (\alpha_0,\dots, \alpha_{m-1})$ so that $\alpha_i = \alpha_{m + i}$ for all $i \ge 0$. Then, for all $j \in \{0,\dots,m-1\}$,
\begin{align*}
\alpha_j \cdot (2^m - 1) &= \sum_{i = 0 \text{ : } \alpha_{i + j} \normalfont{\text{{ odd}}}}^{m - 1} (5 \alpha_{i + j} + 2) \cdot 2^i.
\end{align*}
\end{corollary}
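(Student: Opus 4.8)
The plan is to obtain this corollary as a direct specialization of Theorem \ref{Z_p-to-Z theorem}. First I would match the Collatz map $\col$ to the general template $f$: taking $p = 2$, the even branch $n/2$ is recorded by $N_1 = 1$, $a_0 = 0$, $a_1 = 1$, and the odd branch $3n+1$ by $N_2 = 1$, $b_0 = 1$, $b_1 = 3$. With these values fixed, every quantity appearing in Theorem \ref{Z_p-to-Z theorem} is determined.

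Next I would substitute into the two sums on the right-hand side. In the sum over indices with $2 \mid \alpha_{i+j}$, the bracketed coefficient is $a_0 + (a_1 - 1)\alpha_{i+j}$ (there being no terms with $k \geq 2$ since $N_1 = 1$), which equals $0 + (1-1)\alpha_{i+j} = 0$. Thus the entire ``even'' sum vanishes---this is the crucial collapse, and it happens precisely because the even branch of $\col$ is exactly division by $p$, forcing $a_0 = 0$ and $a_1 = 1$. In the sum over indices with $2 \centernot\mid \alpha_{i+j}$, the bracketed coefficient is $pb_0 + (pb_1 - 1)\alpha_{i+j} = 2 + 5\alpha_{i+j}$. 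Since $2 \centernot\mid \alpha_{i+j}$ is the same as $\alpha_{i+j}$ being odd, the right-hand side reduces to $\sum_{i = 0 \,:\, \alpha_{i+j}\text{ odd}}^{m-1}(5\alpha_{i+j} + 2)\cdot 2^i$, while the left-hand side $\alpha_j(p^m - 1)$ becomes $\alpha_j(2^m - 1)$, yielding the claim.

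I do not anticipate any genuine obstacle, since the statement is an instantiation rather than a new result; the only point requiring care is the bookkeeping of coefficients, namely verifying that $a_1 - 1 = 0$ annihilates the even sum and that $pb_0 = 2$, $pb_1 - 1 = 5$ generate the stated odd coefficient. As an alternative route, one could reprove the identity directly by mimicking the cyclic argument in the proof of Theorem \ref{Z_p-to-Z theorem}, starting instead from Corollary \ref{2-adic correspondence theorem}: exploit the periodicity $\alpha_i = \alpha_{m+i}$ to factor the dyadic expansion of $-\alpha_j$ through the geometric series $\sum_{\ell = 0}^{\infty} 2^{\ell m} = (1 - 2^m)^{-1}$ in $\Z_2$, then clear the denominator $1 - 2^m$. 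Both routes land on the same formula.
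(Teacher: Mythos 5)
Your proposal is correct and takes exactly the route the paper intends: the corollary is a specialization of \autoref{Z_p-to-Z theorem} with $p = 2$, $N_1 = N_2 = 1$, $a_0 = 0$, $a_1 = 1$, $b_0 = 1$, $b_1 = 3$, and your bookkeeping (the even sum annihilated because $a_0 = 0$ and $a_1 - 1 = 0$, the odd summand $pb_0 + (pb_1 - 1)\alpha_{i+j} = 5\alpha_{i+j} + 2$) is precisely right. Your alternative direct argument, factoring the periodic dyadic expansion of $-\alpha_j$ through $\sum_{\ell = 0}^{\infty} 2^{\ell m} = (1 - 2^m)^{-1}$ in $\Z_2$, also coincides with the paper's own (unpublished, commented-out) standalone proof of this statement.
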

\begin{corollary}\label{odd theorem}
Suppose we have a $\col$-cycle $K = (\alpha_0,\dots, \alpha_{m-1})$. Then,
\begin{align*}
\sum_{i = 0}^{m-1} \alpha_i &= \sum_{i = 0 \text{ : } \alpha_i \normalfont{\text{ odd}}}^{m-1} (5 \alpha_i + 2).
\end{align*}
\end{corollary}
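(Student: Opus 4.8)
The plan is to obtain \autoref{odd theorem} as a direct specialization of \autoref{podd theorem} to the Collatz map. The first step is to identify the coefficients. Writing $\col$ in the piecewise-polynomial form of \autoref{main}, we take $p = 2$; on even inputs the branch $x/2$ corresponds to $a_0 = 0$, $a_1 = 1$, and $a_k = 0$ for $k \ge 2$, whereas on odd inputs the branch $3x+1$ corresponds to $b_0 = 1$, $b_1 = 3$, and $b_k = 0$ for $k \ge 2$.

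Substituting these values into \autoref{podd theorem} is then routine. The bracket attached to the $p \mid \alpha_i$ (even) terms collapses to $a_0 + (a_1 - 1)\alpha_i = 0$, so the even-index contributions vanish entirely. The bracket attached to the $p \centernot\mid \alpha_i$ (odd) terms becomes $2b_0 + (2b_1 - 1)\alpha_i = 2 + 5\alpha_i$. Since the prefactor $(p-1)$ equals $1$, the identity of \autoref{podd theorem} reads $\sum_{i=0}^{m-1}\alpha_i = \sum_{i=0 \,:\, \alpha_i \text{ odd}}^{m-1}(5\alpha_i + 2)$, which is exactly the claim.

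The only point demanding genuine care is the coefficient bookkeeping---in particular confirming that the even-index summand is identically zero, so that the right-hand side is supported purely on the odd terms of the cycle. No further work is required, so there is no substantive obstacle here; the content has already been established in \autoref{podd theorem}.

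Alternatively, should one prefer a self-contained argument mirroring the proof of \autoref{podd theorem}, one could instead sum the identity of \autoref{Z_2-to-Z theorem} over $j \in \{0,\dots,m-1\}$. The cyclic symmetry in $i$ and $j$ lets one relabel the arguments $\alpha_{i+j}$ without disturbing the $2^i$ weights, extracting the factor $\sum_{i=0}^{m-1} 2^i = 2^m - 1$ from the resulting sum; this factor then cancels the $(2^m - 1)$ on the left-hand side, yielding the same conclusion. I would present the specialization route as the primary proof, since it avoids re-deriving the cancellation and makes the lineage from \autoref{main} transparent.
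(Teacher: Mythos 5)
Your proposal is correct and matches the paper's approach: the paper presents this statement as a corollary obtained by specializing \autoref{podd theorem} to the Collatz map with $p = 2$, $a_0 = 0$, $a_1 = 1$, $b_0 = 1$, $b_1 = 3$, exactly as you do, and your coefficient bookkeeping is accurate. Your alternative route (summing \autoref{Z_2-to-Z theorem} over $j$ and extracting the factor $2^m - 1$) also mirrors the direct argument the paper uses for the general case, so both of your paths are sound.
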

By substitution into \fullref{Corollary}{odd theorem}, we are left with an elegant equation:
\begin{equation}\label{eq}
S = 5S_{odd} + 2N_{odd}.
\end{equation}
To the author's knowledge, while \fullref{Equation}{eq} does not appear to be present in the literature, it does not lend itself to any computational efficiencies with respect to the search for non-trivial cycles. However, we can apply the same technique to the \textit{inverse Collatz map} \(\col^{-1}\). Observe that, until we prove something about the number of non-trivial cycles, the inverse Collatz map is not well-defined. For example, if there are no other non-trivial cycles\footnote{This gives us an upper (resp. lower) bound on the number of even (resp. odd) terms in an inverse Collatz cycle.}, the inverse Collatz map would be the following:
\[\col^{-1}(n) = \begin{cases}
(n-1)/3, & n \text{ even, } 3 \mid (n - 1);\\
2n, & \text{otherwise}.
\end{cases}\]
Nevertheless, clearly there is one that exists, and we can say something about how it manifests itself using triadic numbers via \autoref{main}.
\begin{theorem}
Any candidate \(\col^{-1}\)-cycle must satisfy
\[2S \leq 5S_{odd'} - N_{even'}.\]
\end{theorem}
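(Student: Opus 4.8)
The plan is to re-run the $p$-adic engine behind \autoref{main} verbatim, taking $p=3$ and $f=\col^{-1}$. First I would read off the coefficients: the contracting rule $(n-1)/3$ is $\tfrac{1}{3}(a_0 + a_1 n)$ with $a_0=-1$, $a_1=1$ and all higher $a_k=0$, while the expanding rule $2n$ is $b_0 + b_1 n$ with $b_0=0$, $b_1=2$ and higher $b_k=0$. Feeding these into the correspondence of \autoref{p-adic correspondence theorem} in base $3$ (so that $A=\sum_i \alpha_i 3^i\in\Z_3$), the contracting bracket collapses to $a_0=-1$ and the expanding bracket to $(3b_1-1)\alpha_i=5\alpha_i$, leaving
\[-n = \sum_{(n-1)/3\text{-steps}}(-1)\cdot 3^i + \sum_{2n\text{-steps}} 5\alpha_i\cdot 3^i.\]
Here the two index sets are governed by which rule the orbit actually applies at step $i$, i.e.\ by the property $\mathfrak{P}=$ ``the contracting rule is used,'' playing the role that ``$p\mid x$'' plays in \autoref{main}.

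Next I would specialise to a cycle and follow \autoref{Z_p-to-Z theorem} and \autoref{podd theorem} with no structural changes: multiply through by the geometric factor $1/(1-3^m)$, sum over the base point, pull each $3^i$ out of the cyclically invariant inner sum, and use $\sum_{i=0}^{m-1}3^i=(3^m-1)/(3-1)$. The $(3^m-1)$ cancels and the surviving $(3-1)=2$ lands on the left, giving the exact identity
\[2S = 5\,S_{odd'} - N_{even'},\]
where $S_{odd'}$ is the sum of the terms at which the expanding rule $2n$ is applied and $N_{even'}$ counts the terms at which the contracting rule $(n-1)/3$ is applied — the primed analogues of the odd/even bookkeeping of \eqref{eq}, since in the forward map it is precisely the odd terms that expand and the even terms that contract. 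In particular the claimed inequality $2S \le 5 S_{odd'} - N_{even'}$ holds, with equality for the canonical single-valued $\col^{-1}$.

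The hard part is that $\col^{-1}$ does not honestly fit the template of \autoref{main}: its contracting branch is selected by the congruence $n\equiv 1\pmod 3$ together with $2\mid n$, not by the divisibility condition $p\mid n$, and, worse, $\col^{-1}$ is only single-valued under the no-other-cycles hypothesis (a number $n\equiv 4\pmod 6$ has the two preimages $2n$ and $(n-1)/3$). Consequently a genuine \emph{candidate} cycle may take either rule at such an $n$, so the relation above is an equality only once one fixes the rule actually used at each step; passing to the literal parity statistics promised in the footnote then requires the containments $\{\text{odd terms}\}\subseteq\{\text{expanding steps}\}$ and $\{\text{contracting steps}\}\subseteq\{\text{even terms}\}$, and orienting these so that the estimate lands in the stated direction is the delicate step. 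Checking that every admissible preimage choice respects that direction is where the real care lies; the $3$-adic identity itself is mechanical once the coefficients $(a_0,a_1)=(-1,1)$ and $(b_0,b_1)=(0,2)$ are in hand.
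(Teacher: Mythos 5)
Your core derivation is correct and it is exactly the argument the paper intends: the paper gives this theorem no proof at all beyond the remark that it follows ``via \autoref{main},'' and your specialization $p=3$, $(a_0,a_1)=(-1,1)$, $(b_0,b_1)=(0,2)$ reconstructs that implicit proof, including the correct observation that the proofs of \autoref{p-adic correspondence theorem}--\autoref{podd theorem} never use the particular shape of the branch condition, only that each branch sends the orbit to integers. Under the reading in which $S_{odd'}$ sums the terms at which $2n$ is applied and $N_{even'}$ counts the terms at which $(n-1)/3$ is applied, your identity $2S = 5S_{odd'} - N_{even'}$ holds for \emph{every} candidate cycle, whatever preimage choices it makes --- not merely for the canonical single-valued $\col^{-1}$, as your closing sentence of the second paragraph suggests --- and the stated inequality follows at once.

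The one genuine defect is the deferral in your final paragraph. The ``delicate step'' you postpone --- using the containments $\{\text{odd terms}\}\subseteq\{\text{expanding steps}\}$ and $\{\text{contracting steps}\}\subseteq\{\text{even terms}\}$ to land the estimate on literal parity statistics in the stated direction --- cannot be carried out, because those containments push the inequality the \emph{other} way: from $2S = 5S_{\mathrm{exp}} - N_{\mathrm{con}}$, positivity of the terms gives $S_{\mathrm{exp}} \geq S_{odd'}$ and $N_{\mathrm{con}} \leq N_{even'}$ with parity-based primes, hence $2S \geq 5S_{odd'} - N_{even'}$. The trivial cycle $(1,2,4)$ makes this concrete: $2S = 14$, while the parity reading yields $5\cdot 1 - 2 = 3$, so ``$\leq$'' fails outright there. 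So rather than promising that ``real care'' can rescue the parity interpretation, you should state that the parity reading renders the theorem false, and commit to the branch-usage reading (which is the faithful analogue of the bookkeeping in \eqref{eq}, since for the forward map parity and branch coincide); under that reading your three-adic identity already proves the theorem, indeed in the stronger form of an equality.
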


Just as \(\col\) appears to almost always converge to 1, \(\col^{-1}\) appears to almost always diverge to infinity. And indeed the right notion of the inverse function will coincide with \(\col\) on cycles. So coupled with the observations that \(N_{even'} \leq N_{odd}\) and \(N_{odd'} \geq N_{even}\), we see that 
\[S_{even'} \leq 3S_{odd} + 2N_{odd} - N_{even'}, \quad S_{odd'} \geq 2S_{odd} + N_{even'}\]
\[\implies S_{even'} - S_{odd'} \leq S_{odd} + 2N_{odd} - 2N_{even'}.\]
Surprisingly, there is an even deeper result that is related to \fullref{Equation}{eq}. 
\begin{theorem}
If the Collatz conjecture is true, then for all positive \(n\)
\[S - (5S_{odd} + 2N_{odd}) = 2 (n - 1),\]
where \(S,\) \(S_{odd},\) and \(N_{odd}\) are computed over the orbit \(n, \col(n), \dots, 2\).
\end{theorem}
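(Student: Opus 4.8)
The plan is to prove this by a direct telescoping of the Collatz recurrence along the finite orbit, invoking the Collatz conjecture only to guarantee that the orbit $\alpha_0 = n,\ \alpha_1 = \col(n),\dots$ actually terminates by reaching $2$. Let $L$ be the first index with $\alpha_L = 2$; this exists precisely because the conjecture forces every orbit into the trivial cycle, and one checks that $4$ is the unique $\col$-predecessor of $2$ (an odd predecessor would require $3x+1=2$), so the terminal $2$ is genuinely reached on the final descent and the prefix $\alpha_0,\dots,\alpha_L$ is unambiguous. Over this finite window I record $S = \sum_{i=0}^{L}\alpha_i$, together with $S_{odd}$, $N_{odd}$, and $S_{even} = S - S_{odd}$.

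First I would exploit the fact that applying the map shifts the orbit by one index, giving
\[\sum_{i=0}^{L-1}\col(\alpha_i) = \sum_{i=1}^{L}\alpha_i = S - n.\]
Next I would evaluate the left-hand side by splitting according to parity, using $\col(\alpha_i) = \alpha_i/2$ when $\alpha_i$ is even and $\col(\alpha_i) = 3\alpha_i + 1$ when $\alpha_i$ is odd. Since $\alpha_L = 2$ is even and is the only even term excluded from the range $i < L$, the even contribution is $\tfrac12(S_{even} - 2)$, while the odd contribution is $3S_{odd} + N_{odd}$, all odd terms already lying in the range $i < L$. This yields
\[S - n = \tfrac12(S_{even} - 2) + 3S_{odd} + N_{odd}.\]

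Finally I would substitute $S_{even} = S - S_{odd}$, clear the factor of two, and rearrange; the terminal $-2$ combines with $-n$ to produce exactly $2(n-1)$, while the left side collapses to $S - (5S_{odd} + 2N_{odd})$. The bookkeeping is routine, and the only genuine subtlety I expect is the treatment of the terminal term $\alpha_L = 2$: getting this boundary contribution right is exactly what turns the purely cyclic identity $S = 5S_{odd} + 2N_{odd}$ of \fullref{Equation}{eq} into the present \emph{defect} formula, with $2(n-1)$ measuring the orbit's failure to close up into a cycle. One could instead derive the same relation from \autoref{2-adic correspondence theorem} by truncating the $2$-adic sum at the descent and summing the geometric tail contributed by the trivial cycle $1, 4, 2$; I would mention this as the conceptual bridge to the $p$-adic framework, but lead with the elementary telescoping since it is shorter and self-contained.
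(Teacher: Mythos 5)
Your argument is correct, and it takes a genuinely different route from the paper, which proves the identity by induction on the orbit length: base case the one-term orbit $(2)$, inductive step peeling off the initial term $n'$, case-splitting on its parity, and converting the hypothesis $S-(5S_{odd}+2N_{odd})=2(\col(n')-1)$ for the shorter orbit into the claim for the longer one. Your telescoping identity
\[
\sum_{i=0}^{L-1}\col(\alpha_i)=S-n,
\]
split by parity into $\tfrac{1}{2}(S_{even}-2)+3S_{odd}+N_{odd}$, is the globally summed (``unrolled'') form of that induction, and your bookkeeping is right: substituting $S_{even}=S-S_{odd}$ and clearing the factor of two yields $S-(5S_{odd}+2N_{odd})=2(n-1)$, including in the degenerate case $n=2$, $L=0$, where the telescoped sum is empty. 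Both arguments use the Collatz conjecture for exactly one thing, namely finiteness of the orbit (existence of your $L$, respectively finiteness of the paper's induction parameter). What your version buys: no induction, and the conceptual point becomes visible that the left-hand side is a ``cycle defect'' --- your computation is word-for-word the derivation of \fullref{Equation}{eq} except for the boundary term contributed by the terminal $2$, which is exactly where $2(n-1)$ comes from. What the paper's version buys: a one-step conservation law that propagates the defect backward along the orbit, arguably more elementary but hiding the parallel with the cycle identity. One small correction: to know $L$ exists you need that $2$ is the unique $\col$-predecessor of $1$ (so any orbit reaching $1$ must pass through $2$, and the orbit of $1$ itself reaches $2$ via $4$); your stated fact, that $4$ is the unique predecessor of $2$, is true but not the relevant one.
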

\begin{proof}
We prove the statement by induction on the length \(\ell\) of the orbit. The case at \(\ell = 1\) is trivial, as \(S = n = 2\) and \(S_{odd} = N_{odd} = 0\). Suppose that for all \(\ell \leq k\) the equation is satisfied, denoting the starting term and respective statistics of the orbit of length \(k + 1\) by \(n'\), \(S'\), \(S_{odd}'\), and \(N_{odd}'\). By assumption, we know that \(\col(n')\) is the start of an orbit which satisfies the inductive hypothesis, so
\[S - (5S_{odd} + 2N_{odd}) = 2(\col(n') - 1),\] where \(S, S_{odd},\) and \(N_{odd}\) are the statistics of the orbit starting at \(\col(n')\).

Suppose that \(n'\) is odd. Then, \(\col(n') = 3n' + 1\) is even, and \(S = S' - n',\) \(S_{odd} = S_{odd}' - n'\), and \(N_{odd} = N_{odd}' - 1\). Thus, 
\[S - (5S_{odd} + 2N_{odd}) = (S' - n') - \left(5(S_{odd}' - n') + 2(N_{odd}' - 1)\right) = 2(\col(n') - 1)\]
\[\implies S' - (5S'_{odd} + 2N'_{odd}) = 2(n' - 1).\]
The proof where \(n'\) is even follows a similar argument.
\end{proof}
Preliminary data suggests that this is a truly unique property of the Collatz map, and that the cycle analogue formula for alternative piecewise polynomials does not give rise to a similar relation.

\end{document}